\newtheorem{theorem}{Theorem}
\newtheorem*{theorem1}{Theorem 1}
\newtheorem*{theorem2}{Theorem 2}
\newtheorem{lemma}[theorem]{Lemma}
\theoremstyle{definition}
\theoremstyle{remark}
\newtheorem{remark}[theorem]{Remark}
\numberwithin{equation}{section}
\definecolor{urlcolor}{rgb}{0,.145,.698}
\definecolor{linkcolor}{rgb}{.71,0.21,0.01}
\definecolor{citecolor}{rgb}{.12,.54,.11}
\definecolor{outerrorbackground}{HTML}{FFDFDF}
\begin{document}

\title{A Fair Shake: How Close Can The Sum Of $n$-Sided Dice Be To A Uniform Distribution?}
\author{Shamil Asgarli}
\email{sasgarli@scu.edu}
\author{Michael Hartglass}
\email{mhartglass@scu.edu}
\author{Daniel Ostrov}
\email{dostrov@scu.edu}
\author{Byron Walden} 
\email{bwalden@scu.edu}
\address{Department of Mathematics and Computer Science \\ Santa Clara University \\ 500 El Camino Real \\ USA 95053}
\maketitle

\begin{abstract}
\noindent Two possibly unfair $n$-sided dice, both labelled $1, 2, \ldots, n$, are rolled, and the sum is recorded. How should the dice's sides be weighted so that the resulting sum is closest to the uniform distribution on $2, 3, \ldots, 2n$? We answer this question by explicitly identifying the optimal pair of dice. This resolves a question raised by Gasarch and Kruskal in 1999 in a surprising way. We present additional results for the case of more than two possibly unfair $n$-sided dice and for the hypothetical case where the weights on each die are permitted to be negative, but must still sum to one.

\end{abstract}

\section{Introduction}

If you roll two six-sided dice, the probability distribution for their sums is triangle shaped:
\begin{figure}[h!]
\centering
\includegraphics[scale=.25, trim=0 0 0 18]{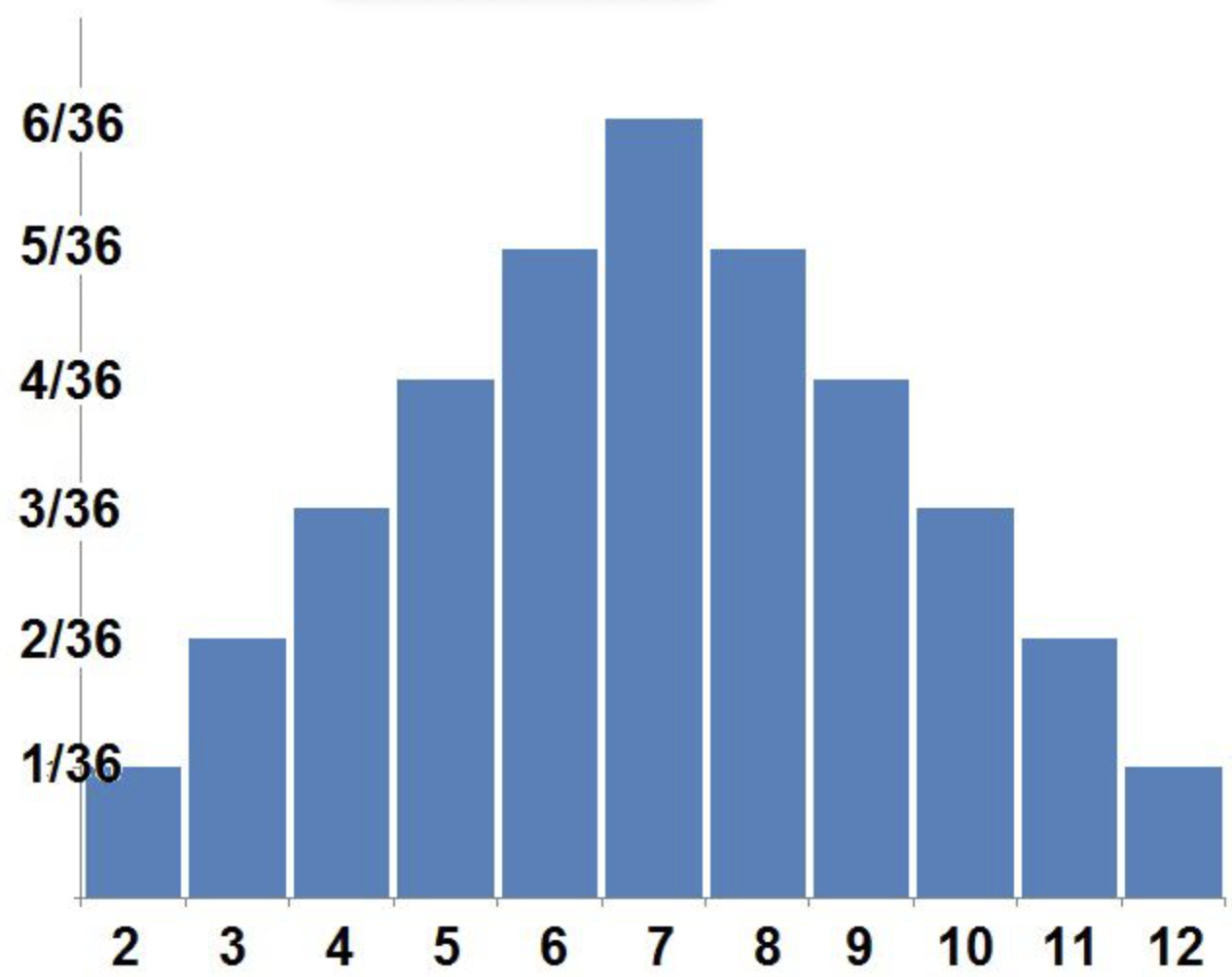}
\label{Fig1} 
\end{figure}\\
This, of course, assumes that there is an equal chance of obtaining each of the six sides. If you could change the probability of rolling each of the six sides for each of the two dice, could you obtain a uniform distribution for each sum?  That is, could you replace the triangular distribution above with a rectangular distribution?

This question was posed by John Kelly in 1950 \cite{Kelly1}, and subsequently it was proven via a number of methods that no matter how the two dice are weighted, a uniform distribution cannot be obtained; see, for example, \cite{Kelly2}, \cite{Hon}, or \cite{Hof}. In particular, a recent book by Bollob\'{a}s \cite{Bol} (see Problem 11) contains two solutions: one uses an elementary argument involving the arithmetic mean-geometric mean (AM-GM) inequality, and the other solution employs generating functions. Further, this was extended by Chen-Rao-Shreve \cite{Chen-Rao-Shreve} to show that even if you have $m$ dice, each with $n$ sides, the dice cannot be weighted so that a uniform distribution is attained for the dice sums. More recently, Morrison~\cite{Mor}, following the main results from Gasarch and Kruskal ~\cite{Gasarch-Kruskal}, systematically constructed when $m$ weighted dice --- each with a potentially different numbers of sides, unlike in \cite{Chen-Rao-Shreve} and this paper --- lead to the dice sums being uniformly distributed. 

Given the result from Chen-Rao-Shreve \cite{Chen-Rao-Shreve} that a uniform distribution cannot be attained, Gasarch and Kruskal pose an interesting question at the end of their article ~\cite{Gasarch-Kruskal} that we will focus on in this paper: what weightings for each of two $n$-sided dice will minimize $D$, the sum of the squared difference between the probability for each dice sum and the uniform distribution? We note that this minimum $D$ must be positive, since $D=0$ corresponds to the unattainable uniform distribution. 

Gasarch and Kruskal state that by programming in Matlab, they find that ``[f]or all $n$, Matlab produced symmetric dice that were identical to each other," where the term ``symmetric dice" means that the probability of rolling a 1 and rolling an $n$ is the same, the probability of rolling a 2 and rolling an $n-1$ is the same, etc., and the term ``identical to each other" means that the probability of rolling any specific side is the same for both dice. For example, for two six-sided dice, they find that Matlab minimized $D$ when the probability of rolling a 1 or a 6 on either die is $0.243883$, the probability of rolling a 2 or a 5 is $0.137480$, and the probability of rolling a 3 or a 4 is $0.118637$. They are, however, careful to state that ``Matlab {\it does not} guarantee that the results are the true optimum, so the question of whether or not optimal dice must be identical and symmetric is interesting and open, even in the cases where we obtained numerical results."

In this paper, we show that Gasarch and Kruskal were right to carefully point out that their Matlab evidence was not the end of the story. In fact, their Matlab results do \emph{not} accurately reflect the true minimum of $D$ when $n>2$! Specifically, we will prove the following theorem.
\begin{theorem}\label{thm:2-dice}
The two $n$-sided dice with side probabilities $\left(\frac{1}{2}, 0, 0,\cdots,0, 0, \frac{1}{2}\right)$ and $\left(\frac{2}{3n-2}, \frac{3}{3n-2}, \frac{3}{3n-2},\cdots,\frac{3}{3n-2},\frac{3}{3n-2}, \frac{2}{3n-2}\right)$ are, up to swapping, the unique pair that minimize $D$, the sum-of-squares difference from the uniform distribution. This minimized value of $D$ is $\frac{1}{2(2n-1)(3n-2)}$, which corresponds to the following probabilities for each of the dice sums: $\left(\frac{2}{2(3n-2)}, \frac{3}{2(3n-2)}, \frac{3}{2(3n-2)},\cdots,\frac{3}{2(3n-2)}, \frac{3}{2(3n-2)}, \frac{4}{2(3n-2)}, \frac{3}{2(3n-2)}, \frac{3}{2(3n-2)}, \cdots, \frac{3}{2(3n-2)}, \frac{3}{2(3n-2)}, \frac{2}{2(3n-2)}\right)$.
\end{theorem}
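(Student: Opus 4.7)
The plan is to recast $D$ as $\sum_k r_k^2 - \tfrac{1}{2n-1}$ (using $\sum_k r_k = 1$), so that minimizing $D$ is equivalent to minimizing $\Sigma(\mathbf{p}, \mathbf{q}) := \sum_{k=2}^{2n} r_k^2$ over the product of two simplices. A direct expansion of the convolution of $\mathbf{p}^*$ and $\mathbf{q}^*$ verifies both the claimed sum distribution and $\Sigma(\mathbf{p}^*, \mathbf{q}^*) = \tfrac{3}{2(3n-2)}$, yielding $D^* = \tfrac{1}{2(2n-1)(3n-2)}$.

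Next I would show that each of $\mathbf{p}^*$, $\mathbf{q}^*$ is the unique best response to the other. When $\mathbf{p} = \mathbf{p}^*$ the convolution collapses to $r_k = \tfrac{1}{2}(q_{k-1}+q_{k-n})$ (extending $q_j = 0$ outside $\{1,\ldots,n\}$), so
\[
\Sigma(\mathbf{p}^*, \mathbf{q}) \;=\; \tfrac{1}{2}\Bigl(\textstyle\sum_{j=1}^{n} q_j^2 \,+\, q_1 q_n\Bigr),
\]
a strictly convex quadratic whose simplex Lagrange conditions uniquely force $\mathbf{q} = \mathbf{q}^*$. Conversely, with $\mathbf{q} = \mathbf{q}^*$ fixed, the convex quadratic $\mathbf{p} \mapsto \Sigma(\mathbf{p}, \mathbf{q}^*)$ has Hessian the Toeplitz autocorrelation matrix of $\mathbf{q}^*$; the KKT conditions at $\mathbf{p}^*$ reduce to equality of gradients at the active indices $1, n$ (immediate by symmetry) together with the inequalities $B_{i-1} + B_{n-i} \ge B_0 + B_{n-1}$ for inactive $i \in \{2,\ldots,n-1\}$, where $B_d := \sum_j q^*_j q^*_{j+d}$. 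A direct computation of the $B_d$ gives $B_0 + B_{n-1} = \tfrac{3}{3n-2}$ and $B_{i-1} + B_{n-i} = \tfrac{3(3n-1)}{(3n-2)^2}$, so the inequality is strict and $\mathbf{p}^*$ is the unique best response to $\mathbf{q}^*$.

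For global optimality I would case-split on the supports $S_p := \{i : p_i > 0\}$ and $S_q$, using the swap symmetry to assume $|S_p| \le |S_q|$. A \emph{gap lemma}: any $k$ with $r_k = 0$ contributes $u^2 = \tfrac{1}{(2n-1)^2} > D^*$ to $D$, so at a global minimum $\mathbf{p}\ast \mathbf{q}$ has full support $\{2,\ldots,2n\}$, forcing $\{1, n\} \subseteq S_p \cap S_q$. In the family $S_p = \{1, n\}$ with $p_1 = a$ and $p_n = 1-a$, repeating the best-response computation gives the closed form
\[
\min_{\mathbf{q} \in \Delta_n} \Sigma(\mathbf{p}, \mathbf{q}) \;=\; \frac{(1-2\delta)(1-\delta)}{n-(n+2)\delta}, \qquad \delta = a(1-a) \in [0, \tfrac{1}{4}],
\]
which is strictly decreasing in $\delta$ (its derivative's numerator is a quadratic in $\delta$ with discriminant $4(2-n) < 0$ for $n \ge 3$, with an easy direct check for $n = 2$), so the minimum over this family is uniquely attained at $\delta = 1/4$, i.e.\ $a = 1/2$, recovering $(\mathbf{p}^*, \mathbf{q}^*)$.

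The main obstacle is the remaining case $|S_p| \ge 3$ (and hence $|S_q| \ge 3$), which includes the fully-interior ``symmetric identical dice'' configuration numerically found by Gasarch and Kruskal. To close this case I would exploit the palindrome symmetry $p_i \mapsto p_{n+1-i}$ together with the swap symmetry to restrict to palindromic critical points, then analyze the resulting bilinear KKT polynomial system---either by showing it admits essentially one solution type and bounding its $\Sigma$-value strictly above $\tfrac{3}{2(3n-2)}$, or by constructing an explicit deformation to $(\mathbf{p}^*, \mathbf{q}^*)$ along which $\Sigma$ strictly decreases. Uniqueness up to swap then follows from the strict KKT inequalities established in the best-response analysis.
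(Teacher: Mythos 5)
Your reduction to minimizing $\sum_j c_j^2$, the verification of the candidate value $\tfrac{3}{2(3n-2)}$, and the ``gap lemma'' (a zero $c_k$ already costs $\tfrac{1}{(2n-1)^2} > D^*$, so at a global minimum $1,n \in S_p\cap S_q$) are all sound, and the best-response computations are plausible. But the proof has a genuine hole exactly where you flag it: the case $|S_p|\ge 3$ and $|S_q|\ge 3$ is never actually handled. What you offer there is a plan, not an argument, and both branches of the plan are problematic. First, you cannot ``restrict to palindromic critical points'' by invoking the palindrome and swap symmetries: a symmetric optimization problem need not have symmetric minimizers (indeed the true optimum is \emph{not} swap-invariant, which is the whole surprise of the theorem), so symmetry only lets you transport minimizers around, not assume a global minimizer lies in the fixed locus. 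Second, ``analyzing the resulting bilinear KKT polynomial system'' for general $n$ is precisely the hard part -- this case contains the Gasarch--Kruskal identical-symmetric configuration, which is a genuine KKT point, so any argument must quantitatively beat it, and no such bound is given. Relatedly, best-response uniqueness of $(\mathbf{p}^*,\mathbf{q}^*)$ only certifies an equilibrium/local property; the closing claim that global uniqueness ``then follows from the strict KKT inequalities'' is not justified until all global minimizers have been classified, which is exactly what the missing case would have to do.

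For contrast, the paper avoids the support case analysis entirely by working with the sum probabilities $c_j$ directly: the AM--GM relation $c_{n+1}\ge 2\sqrt{c_2c_{2n}}$ (with its equality conditions $p_iq_{n+1-i}=0$ for $2\le i\le n-1$ and $p_1q_n=p_nq_1$), combined with the sharpened inequality that $x+y+z=s$, $z\ge 2\sqrt{xy}$ forces $x^2+y^2+z^2\ge \tfrac38 s^2$, plus Cauchy--Schwarz on the remaining $2n-4$ indices, gives the universal bound $\sum_j c_j^2 \ge \tfrac38 s^2 + \tfrac{1}{2n-4}(1-s)^2 \ge \tfrac{3}{2(3n-2)}$ valid for \emph{all} admissible $(\mathbf{p},\mathbf{q})$; the equality conditions pin down the $c_j$ uniquely, and an induction on the equations $c_{2n-i}=c_{n-i+1}=\tfrac32 c_2$ then recovers the unique pair of dice up to swapping. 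Unless you can supply a concrete estimate ruling out interior-support critical points (or adopt some global bound of this kind), the proposal does not yet prove the theorem.
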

According to Theorem~\ref{thm:2-dice}, the actual minimum of $D$ corresponds to symmetric dice, but if $n>2$, they are \emph{not} identical to each other. For example, when $n=3$, the closest to uniform distribution is obtained when one of the two dice has a half and half chance of rolling a 1 or a 3 and the other die has a $\frac{2}{7}$ chance of rolling a 1, a $\frac{3}{7}$ chance of rolling a 2, and a $\frac{2}{7}$ chance of rolling a 3. 

Another potential way to obtain a uniform distribution is to allow for the possibility of negative weights to be assigned to some dice sides. That is, we still require that the weights assigned to the sides of each die add up to one, but we allow these side weights to take negative values. When does this allow us to attain a uniform distribution? The answer is given by the following theorem that we will prove in this paper:
\begin{theorem}\label{thm:neg-prob}
If we have $m$ dice where $m\ge 2$ and we allow negative weights to be assigned to sides of dice, then weightings can be found to attain a uniform distribution (i.e., $D=0$) if and only if $n$, the number of dice sides on each of the $m$ dice, is an odd number.
\end{theorem}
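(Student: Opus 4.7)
The approach is to encode the problem via generating functions and reduce Theorem~\ref{thm:neg-prob} to a real polynomial factorization question. Write the (possibly negative) weights on the $i$-th die as $p_{i,1}, \ldots, p_{i,n}$ and set
\[
Q_i(x) \;=\; \sum_{k=1}^{n} p_{i,k}\, x^{k-1},
\]
so that $Q_i \in \mathbb{R}[x]$ and $Q_i(1) = 1$. The generating function of the dice sum equals $x^m \prod_i Q_i(x)$, while the uniform distribution on $\{m, m+1, \ldots, mn\}$ has generating function $\frac{x^m}{N}(1 + x + \cdots + x^{N-1})$, where $N = m(n-1) + 1$. Hence $D = 0$ is equivalent to
\[
\prod_{i=1}^m Q_i(x) \;=\; \frac{1}{N}\cdot\frac{x^N - 1}{x-1}.
\]
Degree matching forces each $Q_i$ to have degree exactly $n-1$, and since the right side has simple roots precisely at the non-trivial $N$-th roots of unity, the root sets of the $Q_i$ must partition those roots into $m$ blocks of size $n-1$.

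For the ``only if'' direction, suppose $n$ is even. Then each $Q_i$ has odd degree $n-1$ and real coefficients, so each must possess at least one real root. But the only possible real non-trivial $N$-th root of unity is $-1$, present only when $N$ is even, and in any case providing at most one real root to distribute. Since $m \ge 2$, at least two of the $Q_i$ would each need a real root, and there are not enough to go around --- contradiction. This real-roots obstruction, together with the fact that the roots of the factors are forced to lie among the non-trivial $N$-th roots of unity, is the main conceptual step of the proof.

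For the ``if'' direction, suppose $n$ is odd, so that $n-1$ is even and $N$ is odd. Every non-trivial $N$-th root of unity is then non-real, and the $N - 1 = m(n-1)$ such roots split into $m(n-1)/2$ complex conjugate pairs $\{\zeta, \overline{\zeta}\}$. Partition these pairs arbitrarily into $m$ bins of $(n-1)/2$ pairs, and let $q_i(x)$ be the monic polynomial whose roots are the elements of bin $i$. Each $q_i$ has real coefficients, degree $n-1$, and $q_i(1) > 0$, since each quadratic factor $(x-\zeta)(x-\overline{\zeta})$ equals $|1-\zeta|^2 > 0$ at $x = 1$. Setting $Q_i(x) = q_i(x)/q_i(1)$ gives $Q_i(1) = 1$ and
\[
\prod_{i=1}^m Q_i(x) \;=\; \frac{\prod_i q_i(x)}{\prod_i q_i(1)} \;=\; \frac{(x^N - 1)/(x-1)}{N},
\]
where the denominator equals $N$ because $(x^N - 1)/(x-1)$ evaluated at $x=1$ equals $N$. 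Reading off the coefficients of each $Q_i$ yields explicit weightings producing the uniform sum distribution, completing the construction.
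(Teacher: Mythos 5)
Your proof is correct and follows essentially the same route as the paper: encode each die as a real polynomial with value $1$ at $x=1$, match the product against $\frac{1}{N}(1+x+\cdots+x^{N-1})$ with $N=m(n-1)+1$, use the odd-degree/real-root obstruction (with $-1$ the only candidate real root) to rule out even $n$, and for odd $n$ partition the conjugate-pair quadratic factors into $m$ groups and normalize. Your write-up is simply a more detailed version of the paper's argument, in particular making explicit the degree count and the positivity $q_i(1)=\prod|1-\zeta|^2>0$ that justifies the normalization the paper only mentions in passing.
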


In Section~\ref{sect:notation}, we introduce notation needed for our proofs. We establish Theorem~\ref{thm:2-dice} and Theorem~\ref{thm:neg-prob} in Section~\ref{sect:optimal-2-dice} and Section~\ref{sect:pqneg}, respectively. In the spirit of Gasarch and Kruskal, in Section~\ref{sect:multiple-dice}, which concludes this paper, we present two open conjectures supported by what appears to be strong numerical evidence. The easily modified Python program used to gather this numerical evidence is available in an online appendix at \url{https://tinyurl.com/294ycnha}.


\section{Notation} \label{sect:notation}

In the sections that follow, we will use the following notation:

\begin{itemize}

    \item $m$ is the number of dice rolled.

    \item $n$ is the number of sides on each of the $m$ dice. The sides are labelled $1, 2,..., n$. In general, we will use the letter $i$ to index the sides. That is, $i = 1,2,...,n$.

    \item $p_i$ is the probability (or weight) corresponding to the first die landing on side $i$. Similarly, $q_i$ is the probability (or weight) corresponding to the second die landing on side $i$. In Section~\ref{sect:multiple-dice}, we will also use $r_i$ for the result of a third die. In Section~\ref{sect:pqneg} and the next bullet point, we will discuss $m$ dice, so we will use the notation $p_{i_1},q_{i_2},...,z_{i_m}$ to denote the probability of landing on each side of the first die, the second die,..., the $m^{\mbox{th}}$ die.

    \item $c_j$ is the probability that the sum of the $m$ dice equals $j$. So, for example, for two standard dice (i.e., $m=2, n=6,$ and $p_i=q_i=\frac{1}{6}$ for $i = 1,2,...,6$), we have $c_3 = \frac{2}{36}$, since there are two ways to roll a 3 (rolling 1 then 2 or rolling 2 then 1). In general, we will use the letter $j$ to index the possible dice sums. That is, if $m=2$, we have $j=2,3,...,2n$, and for a general $m$, we have $j=m,m+1,...,mn$. Given this, for $m$ dice, $c_j$ is defined by $c_j=\sum_{i_1 + i_2 +\cdots+i_m=j}p_{i_1} \times q_{i_2}\cdots \times z_{i_m}$ where $j=m,m+1,...,mn$.

    \item For two dice, $D$, the sum of the squared difference between the probability for each dice sum and the uniform distribution, is given by $D = \sum_{j=2}^{2n}\left(c_j -\frac{1}{2n-1}\right)^2$. For $m$ dice, $D = \sum_{j=m}^{mn}\left(c_j -\frac{1}{m(n-1)+1}\right)^2$.  Note that for two dice, the uniform distribution corresponds to $c_j =\frac{1}{2n-1}$ for $j=2,3,...,2n$, while for $m$ dice, it corresponds to $c_j =\frac{1}{m(n-1)+1}$ for $j=m,m+1,...,mn$.

\end{itemize}

\section{Identifying the optimal dice} \label{sect:optimal-2-dice}

In this section, we will prove Theorem~\ref{thm:2-dice}, which establishes the probabilities for each side of two $n$-sided dice that are needed to minimize $D$, and therefore obtain the most uniform distribution for the sum of the dice.  We first rewrite Theorem~\ref{thm:2-dice} using the notation from the last section:
\begin{theorem1}
$D$ is minimized if and only if either 
\begin{enumerate}
    \item $p_i=0$, except $p_1=p_n=\frac{1}{2}$, and $q_i=\frac{3}{3n-2}$, except $q_1=q_n=\frac{2}{3n-2}$, or
    \item we swap the roles of all the $p_i$ and the $q_i$ above, which, by symmetry, yields the same $D$ value.
\end{enumerate}
This minimum possible value for $D$ is $\frac{1}{2(2n-1)(3n-2)}$, which corresponds to $c_j=\frac{3}{2(3n-2)}$, except $c_2=c_{2n}=\frac{1}{(3n-2)}$ and $c_{n+1}=\frac{2}{(3n-2)}$. 
\end{theorem1}

We next motivate our approach. Recall that $c_j$ is the probability of the dice summing to $j$, given that $p_i$ is the probability of rolling an $i$ with the first $n$-sided die and $q_i$ is the probability of rolling an $i$ with the second $n$-sided die, so $i=1,2,...,n$ and $j=2,3,...,2n$.  Since the $c_j$ are probabilities, we know that $\sum_{j=2}^{2n} c_j = 1$. Therefore, we can re-express $D$, the quantity that we are trying to minimize that measures the non-uniformity of the dice sum distribution, by
\begin{equation*}
  D=\sum_{j=2}^{2n} \left(c_j-\frac{1}{2n-1}\right)^2 
  = \sum_{j=2}^{2n} c_j^2 - 2\sum_{j=2}^{2n}\frac{c_j}{2n-1} +
  \sum_{j=2}^{2n}\left(\frac{1}{2n-1}\right)^2
  = \sum_{j=2}^{2n} c_j^2 - \frac{1}{2n-1}. 
\end{equation*} 
Therefore, in order to minimize $D$, it suffices to minimize $\sum_{j=2}^{2n} c_j^2$.

We make a key observation for understanding why the uniform distribution $c_{2} = c_{3} = \cdots = c_{2n}$ is unattainable using an argument that follows both the method in the first solution in Bollobás \cite{Bol} and the approach of Moser and Wahab in \cite{Kelly2}. This observation centers on examining the relationship between $c_{2}, \, c_{n+1}$ and $c_{2n}$.  Note that all three of these probabilities depend on $p_{1}, \, q_{1}, \, p_{n}$, and $q_{n}$.   The key inequality relating $c_{2}, \, c_{n+1}$ and $c_{2n}$ is AM-GM, the arithmetic mean-geometric mean inequality:

\begin{lemma}\label{AM-GM}
$c_{n+1} \geq 2\sqrt{c_{2}c_{2n}}$ and equality holds if and only if $p_{i}q_{n+1-i} = 0$ for $2 \leq i \leq n-1$ and $p_{1}q_{n} = p_{n}q_{1}$.
\end{lemma}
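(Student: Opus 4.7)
The plan is to peel off from $c_{n+1}$ the two ``corner'' contributions involving the indices $1$ and $n$, bound the remaining terms trivially using nonnegativity, and then apply AM-GM just to the corner terms.

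First I would write out the three quantities explicitly in terms of the side probabilities. Since the only way to roll a sum of $2$ is to roll a $1$ on each die, and the only way to roll a sum of $2n$ is to roll an $n$ on each die, we have $c_2 = p_1 q_1$ and $c_{2n} = p_n q_n$. For the middle sum, $c_{n+1} = \sum_{i=1}^{n} p_i q_{n+1-i}$. I would then split off the $i=1$ and $i=n$ terms:
\begin{equation*}
c_{n+1} \;=\; p_1 q_n + p_n q_1 + \sum_{i=2}^{n-1} p_i q_{n+1-i}.
\end{equation*}

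Next, since the $p_i$ and $q_i$ are probabilities (hence nonnegative), every summand in $\sum_{i=2}^{n-1} p_i q_{n+1-i}$ is nonnegative, so
\begin{equation*}
c_{n+1} \;\geq\; p_1 q_n + p_n q_1,
\end{equation*}
with equality iff $p_i q_{n+1-i} = 0$ for each $2 \le i \le n-1$. Then I would apply the standard AM-GM inequality to the two nonnegative quantities $p_1 q_n$ and $p_n q_1$:
\begin{equation*}
p_1 q_n + p_n q_1 \;\geq\; 2\sqrt{p_1 q_n \cdot p_n q_1} \;=\; 2\sqrt{(p_1 q_1)(p_n q_n)} \;=\; 2\sqrt{c_2 c_{2n}},
\end{equation*}
where the crucial algebraic rearrangement $p_1 q_n \cdot p_n q_1 = p_1 q_1 \cdot p_n q_n$ is what makes the bound involve exactly $c_2$ and $c_{2n}$. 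Equality in AM-GM here holds iff $p_1 q_n = p_n q_1$. Chaining the two inequalities yields $c_{n+1} \geq 2\sqrt{c_2 c_{2n}}$, and equality in the chain requires both conditions simultaneously, which is exactly the stated characterization.

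There is no real obstacle: the only thing to be careful about is that AM-GM requires nonnegativity, so this argument uses in an essential way that we are in the probabilistic (nonnegative weights) regime of Section~\ref{sect:optimal-2-dice}, and would not apply in the signed-weights setting of Section~\ref{sect:pqneg}. I would mention this distinction briefly when stating the equality case, since it foreshadows why the negative-weight regime in Theorem~\ref{thm:neg-prob} behaves so differently.
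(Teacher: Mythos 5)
Your proposal is correct and follows essentially the same route as the paper: drop the nonnegative terms $p_i q_{n+1-i}$ for $2 \le i \le n-1$ from $c_{n+1}$, then apply AM-GM to $p_1 q_n + p_n q_1$ using $p_1 q_n \cdot p_n q_1 = c_2 c_{2n}$, with the equality conditions tracked through both steps exactly as in the paper's proof of Lemma~\ref{AM-GM}.
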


\begin{proof}
We have, by the definition of $c_{n+1}$ and AM-GM, that
$$ c_{n+1}  =  \sum_{i=1}^{n}p_{i}q_{n+1-i}  \geq  p_1 q_n + p_n q_1 \geq 2\sqrt{p_1 q_1 p_n q_n}=2\sqrt{c_2 c_{2n}}.
$$
In order for $c_{n+1} = 2\sqrt{c_2 c_{2n}}$, every inequality sign must be an equality. Recalling conditions for equality in AM-GM, we must have $p_{i}q_{n+1-i}= 0$ for $2 \leq i \leq n-1$ for the first equality and $p_1 q_n = p_n q_1$ for the second equality. \end{proof}

One now sees why it is impossible for all of the $c_{i}$'s to have the same value: requiring that $c_2=c_{n+1}=c_{2n}>0$ would, from Lemma \ref{AM-GM}, imply that $1\ge 2$!  
Lemma \ref{AM-GM} also suggests a reasonable path forward.  Rather than try to immediately solve for the $p_{i}$ and $q_{i}$ that minimize $\sum_{j=2}^{2n}c^2_{j}$, we should use the fact that the $c_{j}$ are probabilities, in conjunction with Lemma \ref{AM-GM}, to directly find the $c_{j}$ that minimize $\sum_{j=2}^{2n}c_{j}^{2}$ and then use our optimal $c_{j}$ values to obtain the $p_{i}$ and $q_{i}$ values.

To gain a better understanding of the next step, we recall the Cauchy-Schwarz inequality in $\mathbb{R}^{k}$:
$$
|x_{1}y_{1} + x_{2}y_{2} + \cdots + x_{k}y_{k}| \leq \sqrt{x_{1}^{2} + x_{2}^{2} + \cdots + x_{k}^{2}}\cdot \sqrt{y_{1}^{2} + y_{2}^{2} + \cdots + y_{k}^{2}},
$$
where  $x_{1}, \cdots, x_{k}, y_{1}, \cdots, y_{k}$ are any real numbers.  Recall that equality in this expression holds if and only if the vectors $(x_{1}, \cdots, x_{k})$ and $(y_{1}, \cdots, y_{k})$ are scalar multiples of each other.  One immediate consequence of Cauchy-Schwarz is that for any real $x_{1}, \cdots, x_{k}$:
$$
x_{1}^{2} + \cdots + x_{k}^{2} \geq \frac{(x_{1} + \cdots + x_{k})^{2}}{k}, 
$$
which is obtained by setting $y_{1} = y_{2} = \cdots = y_{k} = 1$ and squaring both sides of the Cauchy-Schwarz inequality.

To this end, we separate out the quantities $c_{2}$, $c_{n+1}$ and $c_{2n}$ (as we have information on how they relate), and we let $\Omega$ represent the remaining indices; that is, $\Omega = \{2, 3, \cdots, 2n\} \setminus \{2, n+1, 2n\}$. If we let $s = c_{2} + c_{n+1} + c_{2n}$, then, by the Cauchy-Schwarz inequality, we have that
\begin{eqnarray*}
\sum_{j=2}^{2n} c_j^2 & = & c_2^2 + c_{2n}^2+c_{n+1}^2 + \sum_{j \in \Omega} c_j^2\\
& \geq & c_2^2 + c_{2n}^2+c_{n+1}^2 + \frac{1}{2n-4}\left(\sum_{j \in \Omega} c_j\right)^{2} = c_2^2 + c_{2n}^2+c_{n+1}^2 + \frac{1}{2n-4}(1-s)^{2}.
\end{eqnarray*}

Therefore, to minimize $\sum_{j=2}^{2n} c_j^2$, it is reasonable to try to find $c_{2}, c_{3}, \cdots, c_{2n}$ so that $c_{i} = c_{j}$ whenever $i, j \in \Omega$.  If we can also bound $c_2^2 + c_{2n}^2+c_{n+1}^2$ below by an expression in $s = c_{2} + c_{n+1} + c_{2n}$, then we have bounded $\sum_{j=2}^{2n} c_j^2$ below by an expression in $s$. To that end, we note that
for any $x, y, z\geq 0$, the Cauchy-Schwarz inequality yields $x^2+y^2+z^2 \geq \frac{1}{3}(x+y+z)^2$. For our purpose of finding optimal dice, we use the additional stipulation $z\geq 2\sqrt{xy}$ to obtain a stronger inequality in the following lemma:

\begin{lemma}\label{lemma:inequality-parabola} Let $x,y,z \geq 0$, with $x+y+z=k$ and $z\geq 2\sqrt{xy}$. Then $x^2+y^2+z^2 \geq \frac{3}{8}(x+y+z)^2$, with equality if and only if $x=y=\frac{k}{4}$ and $z= \frac{k}{2}$. 
\end{lemma}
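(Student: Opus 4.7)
The plan is to reduce the constrained three-variable minimization to a piecewise one-variable minimization by parameterizing $s := x+y \in [0,k]$, which via $x+y+z = k$ determines $z = k - s$. With this substitution the objective becomes
\[
x^2 + y^2 + z^2 \;=\; (x+y)^2 - 2xy + z^2 \;=\; s^2 - 2xy + (k-s)^2,
\]
so for each fixed $s$ the problem reduces to \emph{maximizing} $xy$ subject to $x, y \geq 0$ and $x+y = s$. Two upper bounds constrain $xy$: the AM--GM bound $xy \leq s^2/4$, and (after squaring the hypothesis $z \geq 2\sqrt{xy}$, using $z \geq 0$) the bound $xy \leq (k-s)^2/4$. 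These two bounds coincide precisely at $s = k/2$.

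Next I would split into the two regimes dictated by which bound is tighter. When $s \leq k/2$ the AM--GM bound is binding, so the optimal choice is $x = y = s/2$, giving $f(s) := x^2 + y^2 + z^2 = s^2/2 + (k-s)^2$. When $s \geq k/2$ the $z$-constraint is binding, yielding $f(s) = s^2 + (k-s)^2/2$. A quick differentiation shows $f'(s) = 3s - 2k < 0$ on $[0, k/2]$ and $f'(s) = 3s - k > 0$ on $[k/2, k]$, so in both regimes $f$ is monotone and is minimized at the common endpoint $s = k/2$, with value $f(k/2) = k^2/8 + k^2/4 = 3k^2/8$, which is the claimed bound.

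For the equality case, any minimizer must have $s = k/2$ with \emph{both} upper bounds on $xy$ simultaneously attained, i.e.\ $xy = k^2/16$ together with $x + y = k/2$; this forces $x = y = k/4$, and then $z = k - s = k/2$, matching the stated characterization. I do not anticipate a serious obstacle; the only mild point to verify is that the candidate interior critical points, $s = 2k/3$ in the first piece and $s = k/3$ in the second, both lie outside their respective domains, so the minimum is genuinely attained at the junction $s = k/2$ rather than in the interior of either regime.
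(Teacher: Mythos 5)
Your proof is correct, but it follows a genuinely different route from the paper's. The paper verifies the inequality with a single sum-of-squares identity,
\[
8(x^2+y^2+z^2)-3(x+y+z)^2 \;=\; 2(z^2-4xy) + (z-x-y)^2 + (z-2x)^2 + (z-2y)^2,
\]
which is nonnegative term by term once the hypothesis $z \geq 2\sqrt{xy}$ (i.e.\ $z^2-4xy\ge 0$) is invoked, and the equality conditions $z=2x$, $z=2y$ drop out immediately. You instead reduce to a one-variable problem by setting $s=x+y$, $z=k-s$, maximizing $xy$ under the two bounds $xy\le s^2/4$ and $xy\le (k-s)^2/4$, and then minimizing the resulting piecewise function $f(s)$, which is strictly decreasing on $[0,k/2]$ and strictly increasing on $[k/2,k]$, so the minimum $3k^2/8$ occurs only at the junction $s=k/2$; your equality analysis there (both bounds coincide, forcing $x=y=k/4$, $z=k/2$) is sound, and you correctly note that the interior critical points of each piece fall outside their domains. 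The trade-off: the paper's identity is shorter and self-verifying by expansion, but looks pulled out of thin air; your optimization argument is longer and needs the endpoint/monotonicity bookkeeping (plus the trivial degenerate case $k=0$, which you could mention explicitly), but it is discoverable from scratch, explains \emph{why} the minimizer sits exactly where the AM--GM constraint and the $z\ge 2\sqrt{xy}$ constraint bind simultaneously, and would adapt to variants of the inequality with different coefficients.
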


\begin{proof}[Proof] We have $z^2-4xy\geq 0$ by hypothesis.  Some algebra shows that the following identity holds:
$$
8(x^2+y^2+z^2)-3(x+y+z)^2 = 2(z^2-4xy) + (z-x-y)^2 + (z-2x)^2 + (z-2y)^2 \geq 0.
$$
The desired inequality  $x^2+y^2+z^2\geq \frac{3}{8}(x+y+z)^2$ immediately follows. Moreover, equality requires that $z=2x$ and $z=2y$, which then forces $z^2=4xy$ and $z=x+y$. Since $x+y+z=k$, we conclude that the equality holds if and only if $x=y=\frac{k}{4}$ and $z=\frac{k}{2}$.
\end{proof}

In our situation, since we know that $c_{n+1} \geq 2\sqrt{c_{2}c_{2n}}$, then setting $x = c_{2}$, $y = c_{2n}$ and $z = c_{n+1}$ gives us the inequality $c_{2}^2 + c_{n+1}^2 + c_{2n}^2 \geq \frac{3}{8}s^{2}$ with equality if and only if $c_{2} = c_{2n}$ and $c_{n+1} = 2c_{2}$.  From Lemma \ref{AM-GM}, this means that in order for the equality $c_{2}^2 + c_{n+1}^2 + c_{2n}^2 = \frac{3}{8}s^{2}$ to hold, we must have $p_{i}q_{n+1-i} = 0$ whenever $2 \leq i \leq n-1$, and $p_{1}q_{n} = p_{n}q_{1}$.

Putting this all together, we now have that
\begin{eqnarray*}
\sum_{j=2}^{2n} c_j^2 & \geq & c_2^2 + c_{2n}^2+c_{n+1}^2 + \frac{1}{2n-4}(1-s)^{2} \\
& \geq & \frac{3}{8}s^{2} + \frac{1}{2n-4}(1-s)^{2},
\end{eqnarray*}
with equality if and only if $c_{i} = c_{j}$ whenever $i, j \in \Omega$, $p_{i}q_{n+1-i} = 0$ whenever $2 \leq i \leq n-1$, and $p_{1}q_{n} = p_{n}q_{1}$. 

The path forward is now clear:  We find $s \in [0, 1]$ that minimizes $f(s) = \frac{3}{8}s^{2} + \frac{1}{2n-4}(1-s)^{2}$. If we could guarantee that $\sum_{j=2}^{2n} c_j^2 = \frac{3}{8}s^{2} + \frac{1}{2n-4}(1-s)^{2}$, then from our conditions on equality in the previous paragraph, we could then explicitly solve for each $c_{j}$.  If we can find $p_{i}$'s and $q_{i}$'s that produce these $c_{j}$, then we have obtained our desired minimum.

Observe that the graph of $f(s) = \frac{3}{8}s^2 + \frac{1}{2n-4} (1-s)^2$ is an upward-facing parabola whose vertex is at the value of $s$ where $f'(s)= \frac{3}{4} s - \frac{1}{n-2} (1-s) = 0$. That value is $s = \frac{4}{3n-2}$, which gives us
\begin{equation*}
   \sum_{j=2}^{2n} c_j^2 \geq \frac{3}{8} \frac{16}{(3n-2)^2} + \frac{1}{2n-4} \frac{(3n-6)^2}{(3n-2)^2} = \frac{3}{2(3n-2)}. 
\end{equation*}

For equality to hold, $c_{2} = c_{2n} = \frac{s}{4} = \frac{1}{3n-2}$, $c_{n+1} = \frac{s}{2} = \frac{2}{3n-2}$ and if $j \in \Omega$, then $(2n-4)c_{j} = 1-s$. meaning that $c_j = \frac{1}{2n-4} (1 - s) = \frac{1}{2n-4} (1 - \frac{4}{3n-2}) = \frac{3}{2(3n-2)}$. We note that $\sum_{j=2}^{2n} c_j = 1$ with these values, so our $c_j$ values for $j=2,3,...,2n$ are the unique values that minimize $D$.

Recapping, we have that
\begin{equation*}
  D=\sum_{j=2}^{2n} \left(c_j-\frac{1}{2n-1}\right)^2 = \sum_{j=2}^{2n} c_j^2 - \frac{1}{2n-1}  \geq \frac{3}{2(3n-2)} - \frac{1}{2n-1} = \frac{1}{2(2n-1)(3n-2)},  
\end{equation*} 
where equality -- and therefore minimizing $D$ -- can be achieved with the optimal choices of $c_2=c_{2n}=\frac{1}{(3n-2)}$, $c_{n+1}=\frac{2}{(3n-2)}$, and $c_j=\frac{3}{2(3n-2)}$ for $j \in \Omega$.

To finish the proof of Theorem \ref{thm:2-dice}, we need to find the values of the $p_{i}$ and $q_{i}$ that will realize the values of $c_{j}$ just obtained above.

{\it Claim:} To attain the optimal values of $c_j$ above, we either have that $p_1=p_n=1/2$, $q_1=q_n=2/(3n-2)$, and, for each $i=2,\ldots,n-1$, $p_i=0$ and $q_i=3/(3n-2)$, or we swap the roles of the $p_i$ and $q_i$, so
$q_1=q_n=1/2$, $p_1=p_n=2/(3n-2)$, and, for each $i=2,\ldots,n-1$, $q_i=0$ and $p_i=3/(3n-2)$.

\begin{proof}[Proof of claim]
Recall that since $c_{n+1} = 2\sqrt{c_2 c_{2n}}$, we must have that $p_{i}q_{n+1-i}= 0$ for $2 \leq i \leq n-1$. In particular, it follows that $p_{2}q_{n-1} = 0$.  We will assume that $p_{2} = 0$, instead of $q_{n-1}=0$, and use this assumption to show that $p_{k}$ must be 0 for all $k \in \{2, \cdots, n-1\}$. This observation will then determine the values of each $q_{i}$.  If we assume that $q_{n-1} = 0$, instead of assuming $p_{2}=0$, a symmetric argument leads to the same results but with the values of the $p_i$ and the $q_i$ swapped. 

Since $c_2=p_1q_1$ and $c_{2n}=p_n q_n$, we get:
$$
p_1 q_1 + p_n q_n = c_2 + c_{2n} = c_{n+1} = p_1 q_n + p_n q_1,
$$
which can be rearranged as
$$
p_1 q_1 + p_n q_n - p_1 q_n - p_n q_1 = (p_1-p_n)(q_1-q_n)=0,
$$
meaning either $p_1=p_n$ or $q_1=q_n$. However, $p_1 q_1 = p_n q_n$ (since $c_2=c_{2n}$), so each of these equalities forces the other to hold, meaning 
$$p_1=p_n \mbox{ and } q_1=q_n.$$

Next, using that $p_2=0$, we look at $c_3=p_1q_2+p_2q_1 = p_1q_2$. Since $c_3 = \frac{3}{2}c_2=\frac{3}{2}p_1 q_1$, we obtain:
$
\frac{3}{2}p_1 q_1 = p_1q_2.
$
which implies that $q_{2} = \frac{3}{2}q_{1}$.

 Recall that we must have that $p_{i}q_{n+1-i}= 0$ for $2 \leq i \leq n-1$. In particular, setting $i=n-1$, we have that $p_{n-1}q_2 = 0$ and so, since $q_2> 0$, we have that $p_{n-1}=0$. 

Recapping, we have that $p_1=p_n \mbox{ and } q_1=q_n$, which we will use at the end of our proof, and we have that $p_2=p_{n-1}=0$ and $q_2=\frac{3}{2}q_1>0$, which will form the initial step for the following induction argument:

\begin{quote}

{\bf Induction step:} For $i\in\{1,2,...,n-3\}$, if we have that $p_{n-1}=p_{n-2} = ... =p_{n-i} = 0$, then $q_{n-i} = \frac{3}{2}q_1$ and $p_{n-i-1}=0$.

{\it Proof of the induction step:} Combining $c_{2n-i}=\frac{3}{2}c_2 = \frac{3}{2} p_1q_1$ and the definition of $c_{2n-i}$, we have:
$$
\frac{3}{2} p_1q_1 = c_{2n-i} = p_{n} q_{n-i} + p_{n-1} q_{n-i+1} + ... + p_{n-i} q_n.
$$
Given that $p_{n-1}=p_{n-2}=...=p_{n-i}=0$, the above equality implies that $\frac{3}{2} p_1q_1 = p_n q_{n-i} = p_1 q_{n-i}$, which means $q_{n-i}=\frac{3}{2} q_1$. 

On the other hand, combining $c_{n-i+1}=\frac{3}{2}c_2 = \frac{3}{2} p_1q_1$ and the definition of $c_{n-i+1}$, we have:
$$
\frac{3}{2} p_1q_1 = c_{n-i+1} = p_{1} q_{n-i} + p_{2} q_{n-i-1} + ... + p_{n-i-1} q_{2} + p_{n-i}q_{1}.
$$
Since $q_{n-i}=\frac{3}{2}q_1$, we see that $p_{1} q_{n-i}$ term above is already $\frac{3}{2}p_1q_1$, which means the other terms, being non-negative, must each be zero. In particular, $p_{n-i-1}q_2 = 0$, which forces $p_{n-i-1}=0$ since $q_2> 0$.
\qed

\end{quote}

With the induction step established, we can run it using $i=1$, then $i=2$, $i=3$, and so on, until we stop at $i=n-3$. This establishes that $p_i=0$ and $q_i=\frac{3}{2}q_1$ for $i = 2,3,...,n-1.$ 

Since we now know that $p_1=p_n$ and all other $p_i=0$, we have from $\sum_{i=0}^n p_i =1$ that $p_1=p_n=\frac{1}{2}$. Similarly, since we now know that $q_1=q_n$ and all other $q_i=\frac{3}{2}q_1$, we have from $\sum_{i=0}^n q_i =1$ that $q_1=q_n=\frac{2}{3n-2}$ and all other $q_i=\frac{3}{3n-2}$, which establishes our claim. \qed

And with our claim established, this completes the proof of Theorem \ref{thm:2-dice}. \end{proof}

\begin{remark}
Our argument shows that the minimum value of $D$, which measures the distance from the uniform distribution on the set $\{2, \ldots, 2n\}$, approaches $0$ as $n\to\infty$. In other words, even though the uniform distribution is impossible to achieve using the sum of two dice, the best approximation using the optimal dice converges to the desired uniform distribution at a rate of $O\left(\frac{1}{n^2}\right)$ (since $D=\frac{1}{2(2n-1)(3n-2)}$) as  $n$, the number of sides, tends to infinity. 
\end{remark}

\begin{remark}
Also regarding convergence, note that if $X$ and $Y$ are independent random variables where $X$ attains the values 0 and 1 with equal probability (of 1/2) and $Y$ is uniformly distributed on the interval $[0, 1]$, then the distribution of $X + Y$ is uniform on $[0, 2]$. This means, from our discrete dice perspective, if we let $X_{n}$ and $Y_{n}$ be random variables corresponding to rolling each of the two optimal dice given in Theorem 1, except that we normalize the die faces to be $1/n,2/n,...,1$, then as $n \to \infty$, $X_{n}$ and $Y_{n}$ converge in distribution to $X$ and $Y$.  This fits with the flavor of our results and helps further indicate why the optimal dice were unlikely to be symmetric (that is, to have $p_{i} = q_{i}$ for all $i=1,2,...,n$).
\end{remark}

\section{An odd result: generating the proof of Theorem \ref{thm:neg-prob}} \label{sect:pqneg}

In this section, we will prove Theorem \ref{thm:neg-prob}, which shows that for $m \ge 2$ dice, allowing negative values for the $p_i, q_i,..., z_i$ makes a uniform distribution for dice sums attainable if and only if $n$ is odd. More precisely, using the notation introduced in Section \ref{sect:notation}, we can rewrite Theorem \ref{thm:neg-prob} as:

\begin{theorem2}
For $m$ dice, allow $p_i,q_i,...,z_i$, where $i=1,2,...,n$, to be any real (possibly negative) numbers, still subject to the restriction that $\sum_{i=1}^n p_i = \sum_{i=1}^n q_i =...=\sum_{i=1}^n z_i =1$. Each $c_j$ is still defined by $c_j=\sum_{i_1 + i_2 +\cdots+i_m=j}p_{i_1}\times q_{i_2}\times\cdots \times z_{i_m}$ for $j=m,m+1,...,mn$.
If and only if $n$ is odd, $p_i, q_i,...,z_i$ can be chosen so that $c_j =\frac{1}{m(n-1)+1}$ for all $j$, and therefore $D=\sum_{j=m}^{mn}\left(c_j -\frac{1}{m(n-1)+1}\right)^2=0$. In other words,  if $n$ is odd, then a uniform distribution for the $c_j$ is possible. If $n$ is even, then a uniform distribution for the $c_j$ is not possible.
\end{theorem2}


\begin{proof}[Proof]If a set of $m$ dice with $n$ sides has uniform totals, then the generating function of the totals is $\frac{1}{m(n-1)+1}(x^m+...+x^{mn}) = \frac{x^m}{m(n-1)+1}T(x)$, where $T(x) = 1+x+\cdots + x^{m(n-1)}$. Note that $T(x)$ has roots at the non-trivial $(m(n-1)+1)$st roots of unity; in particular, it has a single root $-1$ or none according to whether $m(n-1)+1$ is even or odd. Now, the generating function of \emph{each} die is of the form $p_1 x+ p_2 x^{2}+\cdots + p_n x^n = x(p_1+p_2 x+\cdots + p_n x^{n-1})$ for some $p_i$; the second factor must have a real root if $n$ is even. Thus, if $n$ is even, and $m>1$, the product of the dice generating functions cannot equal $T(x)$. If $n$ is odd, then any partition of $m(n-1)/2$ real quadratic factors of $T(x)$ into $m$ sets of size $\frac{n-1}{2}$ yields $m$ generating functions of degree $n-1$ with product $T(x)$. Multiplying each of these polynomials by $x$, and normalizing the polynomials' coefficients appropriately, we obtain $m$ generating functions whose coefficients give us the desired probabilities.  
\end{proof}

\section{Numerical results create two new open questions for multiple dice} \label{sect:multiple-dice}

The numerical experiments conducted by Gasarch and Kruskal ~\cite{Gasarch-Kruskal} for $m=2$ dice inspired us to resolve the open questions they posed. Similarly, our numerical experiments have led to two new open questions for multiple dice that we hope others will explore. Our numerical experiments for $m$ dice were conducted in Python. The Jupyter notebook containing the Python code for the case of three $n$-sided dice is available in an online appendix, which is located at \url{https://tinyurl.com/294ycnha}.\footnote{The code works with probabilities $p_i, q_i,$ and $r_i$ for the three dice (labelled $\mathtt{p\_i}$, $\mathtt{q\_i}$, and $\mathtt{r\_i}$ in the code), where $i = 1, 2,...,n$. The code is easily extended to more than three dice --- or contracted for the case of two dice. To remove the restriction that each $p_i, q_i,$ and $r_i$ must be between 0 and 1, we can simply remove the phrase ``bounds=bounds'' from the minimize command in code block [6]. However, our open questions in this section consider actual probabilities, so we keep the restriction of being between 0 and 1.}

First open question: For two dice, Gasarch and Kruskal suggested that the $n$-sided dice that minimized $D$ had to be {\it symmetric}, meaning for the first die that $p_1=p_n$, $p_2 = p_{n-1}$, $p_3 = p_{n-2}$, etc., and, similarly, for the second die that $q_1=q_n$, $q_2 = q_{n-1}$, $q_3 = q_{n-2}$, etc.  This paper proves that conjecture to be true when $m=2$ dice, but it is an open question as to whether or not this holds for $m>2$ dice. Our numerical experiments always produce symmetric dice results for minimizing $D$, but that is not a proof, of course!

Second open question: In fact, our experimental results go farther than this, indicating a specific symmetric pattern, which is that the quantity $D$ is minimized when the probabilities on one die (say, the first die, so we are specifying the $p_i$) are given by
$$ p_i =
\left\{
\begin{array}{ll}
\frac{m}{(n-2)(2m-1)+2m}  &\mbox{if } i = 1 \mbox{ or } n \\ 
\frac{2m-1}{(n-2)(2m-1)+2m}    &\mbox{if } i=2,3,...,n-1,
\end{array}
\right.  $$
while, for the other $m-1$ dice, we have
$$ q_i = r_i =...=
\left\{
\begin{array}{ll}
\frac{1}{2}  &\mbox{if } i = 1 \mbox{ or } n \\ 
0    &\mbox{if } i=2,3,...,n-1.
\end{array}
\right.  $$
For example, in our online appendix, we see that the results for $m=3$ and $n=5$ (three five-sided dice) fit this pattern. That is,
\begin{align*}
(p_1, p_2, p_3, p_4, p_5) &= \left(\frac{3}{21}, \frac{5}{21},\frac{5}{21},
\frac{5}{21}, \frac{3}{21}\right) \\
(q_1, q_2, q_3, q_4, q_5) &= \left(\frac{1}{2}, 0, 0, 0, \frac{1}{2}\right) \\
(r_1, r_2, r_3, r_4, r_5) &= \left(\frac{1}{2}, 0, 0, 0, \frac{1}{2}\right).
\end{align*}
The above symmetric pattern agrees with the results of every other example we ran as well, using a variety of values for $n$ and $m$, but the proof (or refutation) of this pattern when $m>2$ remains an open question. 

\subsection*{Acknowledgment.} We thank Frank Farris and Chi Hoi Yip for their helpful feedback on the manuscript. We are also grateful to the two anonymous referees who helped us enhance the clarity of our work.

\begin{bibdiv}
\begin{biblist}

\bib{Kelly1}{article}{
 author = {Kelly, John B.},
 journal = {The American Mathematical Monthly},
 number = {6},
 pages = {p.~416},
 title = {Elementary Problem E925},
 volume = {57},
 year = {1950}
}

\bib{Kelly2}{article}{
 author = {Kelly, J. B.},
 author = {Moser, Leo}, 
 author = {Wahab, J. H.},
 author = {Finch, J. V.}, 
 author = {Halmos, P. R.},
 journal = {The American Mathematical Monthly},
 number = {3},
 pages = {191--192},
 title = {Solution to Elementary Problem E925},
 volume = {58},
 year = {1951}
}

\bib{Bol}{book}{
    AUTHOR = {Bollob\'{a}s, B\'{e}la},
     TITLE = {The art of mathematics},
      NOTE = {Coffee time in Memphis},
 PUBLISHER = {Cambridge University Press, New York},
      YEAR = {2006},
     PAGES = {xvi+359},
      ISBN = {978-0-521-69395-0; 0-521-69395-0},

}

\bib{Hof}{book}{
author = {Hofri, Micha},
title = {Analysis of Algorithms},
year = {1995},
publisher = {Oxford University Press},
address = {Oxford, UK}
}

\bib{Hon}{book}{
    author = {Honsberger, Ross}, 
    title = {Mathematical Morsels},
    year = {1978},
    publisher = {The Mathematical Association of America},
    address = {Washington, DC},
}

\bib{Chen-Rao-Shreve}{article}{
 author = {Chen, Guantao},
 author = {Rao, M. Bhaskara},
 author = {Shreve, Warren E.},
 journal = {Mathematics Magazine},
 number = {3},
 pages = {204--206},
 title = {Can One Load a Set of Dice So That the Sum Is Uniformly Distributed?},
 volume = {70},
 year = {1997}
}

\bib{Gasarch-Kruskal}{article}{
 author = {Gasarch, William I.},
 author = {Kruskal, Clyde P.},
 journal = {Mathematics Magazine},
 number = {2},
 pages = {133--138},
 title = {When Can One Load a Set of Dice so That the Sum Is Uniformly Distributed?},
 volume = {72},
 year = {1999}
}

\bib{Mor}{article}{
    AUTHOR = {Morrison, Ian},
     TITLE = {Sacks of dice with fair totals},
   JOURNAL = {Amer. Math. Monthly},
    VOLUME = {125},
      YEAR = {2018},
    NUMBER = {7},
     PAGES = {579--592},
      ISSN = {0002-9890,1930-0972},
}

\end{biblist}
\end{bibdiv}










    
        













\end{document}